\documentclass[12pt]{amsart}
\numberwithin{equation}{section}
\usepackage{amsmath,amsthm,amsfonts,amscd,eucal}

\usepackage{xypic}
\usepackage{graphicx}

\usepackage{amssymb}
\hfuzz12pt \vfuzz12pt

% abbreviazioni per i caratteri corsivi {\cal}

%abbreviazione per i caratteri gotici {\mathfrak}

% abbreviazioni per i caratteri grassetto {\Bbb }

\def\bc{{\mathbb C}}

\def\bn{{\mathbb N}}

\def\br{{\mathbb R}}

% abbreviazioni per i caratteri greci

\def\b{\beta}

\def\eps{\varepsilon}

\newtheorem{thm}{Theorem}[section]
\newtheorem{lem}[thm]{Lemma}

\newtheorem{prop}[thm]{Proposition}

\theoremstyle{definition}

\def\di{{\rm d}}

\begin{document}

\title[weighted Cesaro averages]
{Limits of some weighted Cesaro averages}
\author{Vitonofrio Crismale}
\address{Vitonofrio Crismale\\
Dipartimento di Matematica\\
Universit\`{a} degli studi di Bari\\
Via E. Orabona, 4, 70125 Bari, Italy}
\email{\texttt{vitonofrio.crismale@uniba.it}}
\author{Francesco Fidaleo}
\address{Francesco Fidaleo\\
Dipartimento di Matematica\\
Universit\`{a} degli studi di Roma Tor Vergata\\
Via della Ricerca Scientifica 1, Roma 00133, Italy} \email{{\tt
fidaleo@mat.uniroma2.it}}
\author{Yun Gang Lu}
\address{Yun Gang Lu\\
Dipartimento di Matematica\\
Universit\`{a} degli studi di Bari\\
Via E. Orabona, 4, 70125 Bari, Italy}
\email{\texttt{yungang.lu@uniba.it}}
\date{\today}

\begin{abstract}
We investigate the existence of the limit of some high order weighted Cesaro averages.

\vskip0.1cm\noindent \\
{\bf Mathematics Subject Classification}: 40G05, 40B05, 11B99.\\
{\bf Key words}:  Sequences; High Order Cesaro Averages; Ergodic Averages; Multi-indices Sequences.
\end{abstract}

\maketitle

\section{introduction}

\label{sec1}

Motivated by potential applications to several branches of the mathematics, we study the possible convergence of high order weighted Cesaro means of the type
\begin{equation}
\label{cesczfi1}
\frac{1}{n^p}\sum_{k=1}^nb_kf(k/n)\,,
\end{equation}
where $p>0$ and $f:(0,1]\to\br$, provided $(b_k)_{k\in\bn}\subset\bc$ is a $p$-mean convergent sequence:
$$
\lim_n\frac{1}{n^p}\sum_{k=1}^nb_k=b\in\bc\,.
$$
Averages like those in \eqref{cesczfi1} naturally appear in Ergodic Theory. They also play a role in Probability, for example in the investigation of the central limit (see e.g. \cite{F}), as well as in Infinite Dimensional Analysis in managing the so-called L\'evy Laplacian (cf. \cite{L}) and exotic, i.e. high order ones, see e.g. \cite{AJS1} and the references cited therein. Cesaro averages as above might find natural applications also in Harmonic Analysis, Linear Algebra and Matrix Theory, Numerical Analysis, Number Theory and
in other sectors of pure and applied mathematics.

The convergence of the mean in \eqref{cesczfi1} depends on the conditions imposed on the function $f$, which are listed in our main result in Section \ref{sec1bis}. For example, we get convergence for the simple cases
$$
f(x)=x^q\,, \,\,f(x)=(1-x)^q\,, \quad q>0\,,
$$
which leads to the results in Section \ref{seca} concerning averages of multi-indices sequences.

The weighted averages of multi-indices sequences
appear in managing some quantum central limit theorems, when the sequence of mean covariances is not constant but at least convergent, and an order structure on some indices affects the value of the so-called mixed moments. Indeed, Propositions \ref{le-kro01} and \ref{le-kro02}, which quite surprisingly lead to results which cannot be reflected, may be naturally exploited in Anti-Monotone and Monotone cases (see e.g \cite{CFL, CFL2, Mur}). In order to get a flavour of the several kinds of mixed moments naturally emerging in Quantum Probability and the associated problem of their computation, the reader is referred to \cite{ACL, AHO, BS, CrLu} and the references cited therein.

The last section is devoted to counterexamples which explain that all the conditions imposed on our results are essentially optimal.

We end by noticing that particular cases of averages considered here appear in Section 5 of \cite{AJS} (see also \cite{AJS1}), where also several continuous versions of averages are investigated.

\section{limits of weighted cesaro means}
\label{sec1bis}

In the present note we suppose that the set of natural numbers does not contain 0:
$$
\bn:=\{1,2,\dots,n,\dots\}\,.
$$

We start with some elementary notations by denoting for each function $f:(0,1]\to\br$, a sequence ${\bf b}:=(b_n)_{n\in\mathbb{N}}\subset\bc$, and finally $p\in(0,+\infty)$,
$$
M_{{\bf b},f;p}(n):=\frac{1}{n^p}\sum_{k=1}^nb_kf(k/n)
$$
some useful high order weighted Cesaro means. For any sequence ${\bf b}$, by $|{\bf b}|$ we denote the sequence $(|b_n|)_{n\in\mathbb{N}}$.
A sequence ${\bf b}$ is said to be {\it $p$-mean convergent} if the sequence $(M_{{\bf b},1;p}(n))_{n\in\bn}$ of its Cesaro $p$-averages is convergent, where 1 stands for the constant function $f=1$ identically. When $p=1$, we recover the usual setting concerning the arithmetic means. It is easy to show that,
if ${\bf b}$ is $p$-mean convergent then $b_n=o(n^p)$ for $n\to+\infty$.

Let $f:(0,1]\to\br$ be a monotone function. Define on $(0,1]$ the possible infinite Borel measure $|\di f|$ induced by the Stieltjes integral with respect to $f$ if it is increasing, of by $-f$ if
$f$ is decreasing, see e.g. \cite{R}, Section 12.3.

The following result is useful in the sequel:
\begin{lem}
\label{le-si}
Let  ${\bf a}$ and ${\bf b}$ be convergent and $p$-mean convergent sequences with $\lim_na_n=a$ and $\lim_nM_{{\bf b},1;p}(n)=b$, respectively. Suppose that
\begin{equation}
\label{mcza}
M_{|{\bf b}|,1;p}(n)\leq B\,,\qquad n\in\bn\,,
\end{equation}
then the product sequence ${\bf ab}$ is $p$-mean convergent with
$$
\lim_n M_{{\bf ab},1;p}(n)=ab\,.
$$
\end{lem}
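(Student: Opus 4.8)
The plan is to reduce the assertion to the $p$-mean convergence of $\mathbf{b}$ together with the uniform bound \eqref{mcza}, via the standard device of subtracting the limit of $\mathbf{a}$. Writing $a_k=a+(a_k-a)$, I would split
$$
M_{\mathbf{ab},1;p}(n)=\frac{1}{n^p}\sum_{k=1}^n a_kb_k
= a\,M_{\mathbf{b},1;p}(n)+\frac{1}{n^p}\sum_{k=1}^n (a_k-a)b_k\,.
$$
The first summand converges to $ab$ directly by hypothesis, so the whole matter is to prove that the remainder $R(n):=\frac{1}{n^p}\sum_{k=1}^n (a_k-a)b_k$ tends to $0$.

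For the remainder, I would fix $\eps>0$ and use $a_k\to a$ to choose $N$ with $|a_k-a|<\eps$ for all $k>N$. Then, for $n>N$, I decompose $R(n)$ into the initial block $\frac{1}{n^p}\sum_{k=1}^N (a_k-a)b_k$ and the tail $\frac{1}{n^p}\sum_{k=N+1}^n (a_k-a)b_k$. The initial block is a fixed finite quantity (independent of $n$) divided by $n^p$, and since $p>0$ it vanishes as $n\to\infty$. The tail is controlled by
$$
\left|\frac{1}{n^p}\sum_{k=N+1}^n (a_k-a)b_k\right|
\le \eps\,\frac{1}{n^p}\sum_{k=N+1}^n |b_k|
\le \eps\,M_{|\mathbf{b}|,1;p}(n)\le \eps B\,,
$$
where the final inequality is exactly \eqref{mcza}.

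Combining the two estimates gives $\limsup_n|R(n)|\le \eps B$, and letting $\eps\downarrow 0$ yields $R(n)\to 0$, whence $M_{\mathbf{ab},1;p}(n)\to ab$. The only genuinely substantive step is the tail bound: it is precisely here that hypothesis \eqref{mcza} is indispensable, since the absolute $p$-mean boundedness of $\mathbf{b}$ is what converts the smallness of $|a_k-a|$ on the tail into smallness of the whole average. Everything else is bookkeeping, and I would not expect any real obstacle beyond ensuring the finite initial block is disposed of correctly using $p>0$.
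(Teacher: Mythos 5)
Your proof is correct and follows essentially the same route as the paper's own argument: both write $a_kb_k=ab_k+(a_k-a)b_k$, split the remainder sum at the index beyond which $|a_k-a|<\eps$, dispose of the fixed initial block using $n^{-p}\to 0$, and bound the tail by $\eps B$ via \eqref{mcza}. The only cosmetic difference is that the paper expresses the initial-block estimate through the quantities $M_{{\bf ab},1;p}(l_0)$ and $M_{{\bf b},1;p}(l_0)$, whereas you simply observe it is a fixed quantity divided by $n^p$.
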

\begin{proof}
Fix $\eps>0$ and choose $l_0$ such that $n>l_0$ implies $|a_n-a|<\eps$. We get for $n>l_0$,
\begin{align*}
\bigg|M_{{\bf ab},1;p}(n)-ab\bigg|&\leq\bigg(\frac{l_0}{n}\bigg)^p\bigg(\bigg|M_{{\bf ab},1;p}(l_0)\bigg|+\bigg|a M_{{\bf b},1;p}(l_0)\bigg|\bigg)\\
&+\bigg|a \bigg(M_{{\bf b},1;p}(n)-b\bigg)\bigg| +\eps B\,.
\end{align*}
We then have
$$
\limsup_n\bigg|M_{{\bf ab},1;p}(n)-ab\bigg|\leq \eps B\,,
$$
which leads to the assertion being $\eps$ arbitrary.
\end{proof}

Here, there is our main result:
\begin{thm}
\label{le-kro00}
Fix a $p$-mean convergent sequence ${\bf b}$ with $\lim_nM_{{\bf b},1;p}(n)=b$, and a monotone function $f:(0,1]\to\br$ such that $f\in L^1((0,1],x^{p-1}\di x)$ and $x^p\in L^1((0,1],|\di f|)$.
Then
\begin{equation*}
%\label{kr01}
\lim_{n}M_{{\bf b},f;p}(n)=bp\int_0^1x^{p-1}f(x)\di x\,.
\end{equation*}
\end{thm}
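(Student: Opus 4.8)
The plan is to transfer all the asymptotic information from ${\bf b}$ to its partial sums $B_m:=\sum_{k=1}^m b_k$ (with $B_0:=0$) and to read off the limit from the behaviour of $B_m/m^p$, which by hypothesis tends to $b$. Replacing $f$ by $-f$ if necessary, I may assume $f$ is nondecreasing, so that $\di f=|\di f|$ is a nonnegative (possibly infinite) Borel measure on $(0,1]$ and $x\mapsto x^p$ is an increasing, continuous integrand. Two preliminary facts about $f$ will be recorded first. Integrating by parts on $[\eps,1]$ gives
\[
\int_\eps^1 x^p\,\di f=f(1)-\eps^p f(\eps)-p\int_\eps^1 x^{p-1}f(x)\,\di x;
\]
both integrals converge as $\eps\downarrow0$ by the hypotheses $x^p\in L^1((0,1],|\di f|)$ and $f\in L^1((0,1],x^{p-1}\di x)$, so $L:=\lim_{\eps\to0^+}\eps^p f(\eps)$ exists; were $L\neq0$ one would have $|f(x)|\sim|L|x^{-p}$ near $0$, whence $\int_0^\delta x^{p-1}|f(x)|\,\di x=+\infty$, contradicting integrability. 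Thus $\lim_{x\to0^+}x^p f(x)=0$ and
\[
\int_0^1 x^p\,\di f=f(1)-p\int_0^1 x^{p-1}f(x)\,\di x.
\]

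Next I apply Abel summation. With $b_k=B_k-B_{k-1}$, the identity reads
\[
\sum_{k=1}^n b_k f(k/n)=B_n f(1)-\sum_{k=1}^{n-1}B_k\big(f((k+1)/n)-f(k/n)\big),
\]
so dividing by $n^p$,
\[
M_{{\bf b},f;p}(n)=\frac{B_n}{n^p}\,f(1)-\frac1{n^p}\sum_{k=1}^{n-1}B_k\big(f((k+1)/n)-f(k/n)\big).
\]
Since $B_n/n^p=M_{{\bf b},1;p}(n)\to b$, the first term tends to $bf(1)$, and everything reduces to showing that the second sum converges to $b\int_0^1 x^p\,\di f$.

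For this I write $B_k=(b+r_k)k^p$ with $r_k\to0$ and split accordingly. The leading part $b\sum_{k=1}^{n-1}(k/n)^p(f((k+1)/n)-f(k/n))$ is $b$ times a left-endpoint Riemann--Stieltjes sum of the continuous function $x^p$ against the nondecreasing $f$; on each $[\delta,1]$ these sums converge to $\int_\delta^1 x^p\,\di f$ (there are no common discontinuities, since $x^p$ is continuous), while the mass on $[0,\delta]$ is dominated, uniformly in $n$, by $\int_0^\delta x^p\,|\di f|$, which is small because $x^p\in L^1(|\di f|)$; letting $\delta\downarrow0$ yields $b\int_0^1 x^p\,\di f$. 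The remainder $\sum_k r_k(k/n)^p(f((k+1)/n)-f(k/n))$ is the crux: given $\eta>0$, choose $K$ with $|r_k|<\eta$ for $k>K$; since $x^p$ is increasing, the left-endpoint sum over $k>K$ is bounded by the integral, so that tail is at most $\eta\int_0^1 x^p\,|\di f|$, whereas the head $\sum_{k\le K}$ is controlled by $(K/n)^p\big(f((K+1)/n)-f(1/n)\big)$, which tends to $0$ as $n\to\infty$ thanks to $\lim_{x\to0^+}x^p f(x)=0$. Letting $\eta\downarrow0$ kills the remainder. Combining the two displays of the previous paragraph with the recorded integration-by-parts identity gives
\[
\lim_n M_{{\bf b},f;p}(n)=bf(1)-b\Big(f(1)-p\int_0^1 x^{p-1}f(x)\,\di x\Big)=bp\int_0^1 x^{p-1}f(x)\,\di x,
\]
as asserted. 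I expect the genuine difficulty to lie in this last paragraph: controlling the weighted Riemann--Stieltjes remainder uniformly in $n$, where the finiteness $\int_0^1 x^p\,|\di f|<\infty$ and the boundary decay $x^p f(x)\to0$ must be used together.
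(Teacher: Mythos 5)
Your proof is correct, and its skeleton coincides with the paper's: both arguments rest on Abel summation with respect to the partial sums $B_k$ (the paper writes $B_k=(b+c_k)k^p$ with $c_k:=M_{{\bf b},1;p}(k)-b$, which is exactly your $r_k$), and both control the resulting remainder $\sum_k c_k\,(k/n)^p\,[\,\cdot\,]$ by the same head/tail splitting, with the tail dominated by $\eps\int_0^1x^p\,|\di f|$. Where you genuinely diverge is the treatment of the main ($b$-weighted) term. The paper applies the discrete integration by parts once more, in reverse, so that its main term becomes the Riemann--Stieltjes sum $\sum_{k=1}^n f(k/n)\big[(k/n)^p-((k-1)/n)^p\big]$, and it proves directly (its limit \eqref{rsm}) that this converges to $p\int_0^1x^{p-1}f(x)\,\di x$; the hypothesis $f\in L^1((0,1],x^{p-1}\di x)$ is used there, to dominate the portion of that sum near $0$. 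You instead keep the main term as a Stieltjes sum of $x^p$ against $\di f$, obtain $\int_0^1x^p\,\di f$, and only convert to the stated integral at the very end via integration by parts. That conversion forces you to prove an extra fact the paper never needs: the boundary decay $\lim_{x\to0^+}x^pf(x)=0$, which you correctly extract from the two integrability hypotheses (the limit exists by integration by parts on $[\eps,1]$, and it must vanish since otherwise $x^{p-1}f\notin L^1$); you then use this decay a second time to kill the head of your remainder, where the paper instead uses the smallness of $\int_0^{\delta}x^p|\di f|$. In exchange, you avoid the paper's separate $\eps$-argument establishing \eqref{rsm}. The net effect is the same amount of work, differently distributed: your route makes every discrete object a Stieltjes sum against $\di f$ and isolates a boundary lemma of some independent interest, while the paper's route never has to examine the pointwise behaviour of $f$ at the origin.
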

\begin{proof}
We can suppose, without loosing generality, that $f$ is decreasing by passing possibly to the opposite function, and positive by possibly adding a constant. Under the last hypotheses, for each
$\eps>0$ there exists $n_0$ such that, if $n>n_0$
$$
0\leq\sum_{k=1}^{[\frac{n}{n_0}]}f\bigg(\frac{k}{n}\bigg)\bigg[\bigg(\frac{k}{n}\bigg)^p-\bigg(\frac{k-1}{n}\bigg)^p\bigg]
\leq p\int_0^{1/n_0}x^{p-1}f(x)\di x\leq\eps\,,
$$
where $[x]$ is the unique integer such that $[x]\leq x< [x]+1$ for any arbitrary real $x$.
We then argue that
\begin{align*}
0\leq&p\int_0^{1}x^{p-1}f(x)\di x-\sum_{k=1}^nf\bigg(\frac{k}{n}\bigg)\bigg[\bigg(\frac{k}{n}\bigg)^p-\bigg(\frac{k-1}{n}\bigg)^p\bigg]\\
\leq& p\int_0^{1/n_0}x^{p-1}f(x)\di x+\sum_{k=1}^{[\frac{n}{n_0}]}f\bigg(\frac{k}{n}\bigg)\bigg[\bigg(\frac{k}{n}\bigg)^p-\bigg(\frac{k-1}{n}\bigg)^p\bigg]\\
+&\bigg\{p\int_{1/n_0}^1x^{p-1}f(x)\di x-\sum_{k=[\frac{n}{n_0}]+1}^n f\bigg(\frac{k}{n}\bigg)\bigg[\bigg(\frac{k}{n}\bigg)^p-\bigg(\frac{k-1}{n}\bigg)^p\bigg]\bigg\}\\
\leq&2\eps+\bigg\{p\int_{1/n_0}^1x^{p-1}f(x)\di x-\sum_{k=[\frac{n}{n_0}]+1}^n f\bigg(\frac{k}{n}\bigg)\bigg[\bigg(\frac{k}{n}\bigg)^p-\bigg(\frac{k-1}{n}\bigg)^p\bigg]\bigg\}\\
\to&2\eps
\end{align*}
for $n\to+\infty$, since one recognises the last term as the Riemann-Stieltjes sum of
$$
\int_0^1f(x)\di x^p=p\int_0^1 f(x)x^{p-1}\di x\,.
$$
As $\eps>0$ is arbitrary, we conclude that
\begin{equation}
\label{rsm}
\lim_n\sum_{k=1}^nf\bigg(\frac{k}{n}\bigg)\bigg[\bigg(\frac{k}{n}\bigg)^p-\bigg(\frac{k-1}{n}\bigg)^p\bigg]=p\int_0^{1}x^{p-1}f(x)\di x\,.
\end{equation}
With
$$
c_n:=M_{{\bf b},1;p}(n)-b\,,\quad n\in\bn\,,
$$
we get
\begin{align*}
&M_{{\bf b},f;p}(n)=c_nf(1)+b\sum_{k=1}^nf\bigg(\frac{k}{n}\bigg)\bigg[\bigg(\frac{k}{n}\bigg)^p-\bigg(\frac{k-1}{n}\bigg)^p\bigg]\\
+&\sum_{k=2}^nc_{k-1}\bigg(\frac{k-1}{n}\bigg)^p\bigg[f\bigg(\frac{k-1}{n}\bigg)-f\bigg(\frac{k}{n}\bigg)\bigg]\,.
\end{align*}
For each $\eps>0$, let $n_0$ such that $n>n_0$ implies $|c_n|<\eps$. Then for every $n$ sufficiently big,
\begin{align*}
&\bigg|\sum_{k=2}^nc_{k-1}\bigg(\frac{k-1}{n}\bigg)^p\bigg[f\bigg(\frac{k-1}{n}\bigg)-f\bigg(\frac{k}{n}\bigg)\bigg]\bigg|\\
\leq&\sum_{k=2}^{n_0+1}|c_{k-1}|\bigg(\frac{k-1}{n}\bigg)^p\bigg[f\bigg(\frac{k-1}{n}\bigg)-f\bigg(\frac{k}{n}\bigg)\bigg]\\
+&\sum_{k=n_0+2}^n|c_{k-1}|\bigg(\frac{k-1}{n}\bigg)^p\bigg[f\bigg(\frac{k-1}{n}\bigg)-f\bigg(\frac{k}{n}\bigg)\bigg]\\
<&\sup_n|c_n|\int_0^{\frac{n_0+1}{n}}\!x^p|\di f(x)|+\eps\int_0^1x^p|\di f(x)|\,,
\end{align*}
which goes to 0 as $n\to+\infty$, because $\eps>0$ is arbitrary. Collecting the last computation with \eqref{rsm}, we get the result.
\end{proof}

\section{some multi-dimensional cases}
\label{seca}

The present section is devoted to the investigation of some ergodic limits of multi-dimensional Cesaro averages which may appear in the study of Quantum Central Limit Theorems as those considered in \cite{CFL2}.
\begin{prop}
\label{le-kro01}
Let ${\bf b}$ be a $p$-mean convergent sequence satisfying \eqref{mcza} with $\lim_nM_{{\bf b},1;p}(n)=b$, and $(a_{k_{1},\ldots,k_{m}})_{k_{1},\ldots,k_{m}\in\mathbb{N}}\subset\bc$ a multi-indices sequence such that for $q>0$,
$$
\lim_{n}\frac{1}{n^{q}}\sum_{1\leq k_{1},\ldots,k_{m}\leq
n}a_{k_{1},\ldots,k_{m}}=a\,.
$$
Then
\begin{equation*}
\lim_{n}\frac{1}{n^{p+q}}\sum_{k=1}^nb_k\sum_{1\leq k_{1},\ldots,k_{m}\leq k}a_{k_{1},\ldots,k_{m}}=\frac{abp}{p+q}\,.
\end{equation*}
\end{prop}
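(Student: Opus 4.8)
The plan is to reduce the multi-index problem to the one-dimensional Theorem~\ref{le-kro00} applied to the power function $f(x)=x^q$, and then to splice in the hypothesis on the $a_{k_1,\ldots,k_m}$ via the product Lemma~\ref{le-si} used at the shifted order $p+q$. Write
$$
A_k:=\sum_{1\leq k_1,\ldots,k_m\leq k}a_{k_1,\ldots,k_m}\,,\qquad \alpha_k:=\frac{A_k}{k^q}\,,
$$
so that the quantity to be analysed is $\frac{1}{n^{p+q}}\sum_{k=1}^n b_k A_k$, while the hypothesis on the multi-indices sequence reads exactly $\alpha_k\to a$, i.e.\ $(\alpha_k)_{k\in\bn}$ is a convergent sequence with limit $a$.

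First I would record that $f(x)=x^q$ is increasing and positive on $(0,1]$ and satisfies both integrability requirements of Theorem~\ref{le-kro00}: indeed $\int_0^1 x^{p-1}x^q\,\di x=\frac{1}{p+q}<\infty$, and since $\di f=q\,x^{q-1}\di x$ we also have $\int_0^1 x^p\,|\di f|=\frac{q}{p+q}<\infty$. Applying the theorem to this $f$ gives
$$
M_{{\bf b},f;p}(n)=\frac{1}{n^p}\sum_{k=1}^n b_k\Big(\frac{k}{n}\Big)^q=\frac{1}{n^{p+q}}\sum_{k=1}^n b_k k^q\;\longrightarrow\; bp\int_0^1 x^{p+q-1}\,\di x=\frac{bp}{p+q}\,,
$$
so the sequence ${\bf c}:=(b_k k^q)_{k\in\bn}$ is $(p+q)$-mean convergent with limit $bp/(p+q)$. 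Next I would verify that ${\bf c}$ inherits the boundedness hypothesis \eqref{mcza} at order $p+q$: since $(k/n)^q\leq 1$ for $1\leq k\leq n$,
$$
M_{|{\bf c}|,1;p+q}(n)=\frac{1}{n^p}\sum_{k=1}^n |b_k|\Big(\frac{k}{n}\Big)^q\leq\frac{1}{n^p}\sum_{k=1}^n |b_k|=M_{|{\bf b}|,1;p}(n)\leq B\,,
$$
so the same constant $B$ works.

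Finally I would invoke Lemma~\ref{le-si} with the parameter $p$ there replaced by $p+q$, taking as the convergent sequence $(\alpha_k)_{k\in\bn}$ (limit $a$) and as the $(p+q)$-mean convergent sequence ${\bf c}$ (limit $bp/(p+q)$, satisfying the boundedness bound just checked). The lemma yields that the product $(\alpha_k b_k k^q)_{k\in\bn}=(b_k A_k)_{k\in\bn}$ is $(p+q)$-mean convergent with limit $a\cdot\frac{bp}{p+q}$; as $M_{(b_\cdot A_\cdot),1;p+q}(n)=\frac{1}{n^{p+q}}\sum_{k=1}^n b_k A_k$ is precisely the average in the statement, the claimed value $\frac{abp}{p+q}$ follows. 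The only genuine point needing care is confirming that the product-lemma hypotheses transfer cleanly to order $p+q$; once one recognises that multiplication by $f(x)=x^q$ turns the $p$-mean convergence of ${\bf b}$ into the $(p+q)$-mean convergence of $(b_k k^q)_k$ with the exact constant $bp/(p+q)$, and that the bound \eqref{mcza} is preserved because $(k/n)^q\leq1$, the remainder is bookkeeping with no further analytic difficulty.
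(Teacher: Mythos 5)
Your proof is correct, and it uses exactly the same two ingredients as the paper's own argument --- Lemma \ref{le-si} and Theorem \ref{le-kro00} applied to $f(x)=x^q$ --- but composes them in the opposite order. The paper first invokes Lemma \ref{le-si} at order $p$ to conclude that the product sequence $(\alpha_k b_k)_{k\in\bn}$ (in your notation) is $p$-mean convergent with limit $ab$, and then applies Theorem \ref{le-kro00} to that product sequence with $f(x)=x^q$, reading the quantity in question as $M_{{\bf ab},x^q;p}(n)$; no further verification is needed there, because Theorem \ref{le-kro00} does not require the bound \eqref{mcza} for the sequence it acts on. You instead apply Theorem \ref{le-kro00} first, to ${\bf b}$ itself, obtaining the $(p+q)$-mean convergence of $(b_kk^q)_{k\in\bn}$ with limit $bp/(p+q)$, and then use Lemma \ref{le-si} at order $p+q$; this route forces you to check in addition that hypothesis \eqref{mcza} transfers to $(b_kk^q)_{k\in\bn}$ at order $p+q$, which you do correctly via $(k/n)^q\leq 1$. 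Both routes are equally rigorous: the paper's ordering is marginally more economical since it dispenses with that transfer step, while yours makes explicit the mildly useful fact that the constant $B$ in \eqref{mcza} is inherited unchanged under multiplication by the weights $k^q$ at the shifted order.
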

\begin{proof}
Notice that
$$
\frac{1}{n^{p+q}}\sum_{k=1}^nb_k\sum_{1\leq k_{1},\ldots,k_{m}\leq k}a_{k_{1},\ldots,k_{m}}
=M_{{\bf ab},x^q;p}(n)\,,
$$
where
$$
a_k:=\frac1{k^q}\sum_{1\leq k_{1},\ldots,k_{m}\leq k}a_{k_{1},\ldots,k_{m}}\,,\qquad k\in\bn\,,
$$
defines the sequence ${\bf a}$ which is supposed to be convergent. The proof now follows from Lemma \ref{le-si} and Theorem \ref{le-kro00}.
\end{proof}
Recall that the Euler's Beta and Gamma functions are defined respectively as
$$
\b(z,t):=\int_0^1 x^{z-1}(1-x)^{t-1}\di x\,,\quad {\rm Re}(z), {\rm Re}(t)>0\,,
$$
$$
\Gamma(z):=\int_0^{+\infty}x^{z-1}e^{-x}\di x\,,\quad z\in\mathbb{C}\backslash\{0,-1,-2,\ldots\}\,.
$$
Such special functions are related by the celebrated identity
\begin{equation}
\label{eul}
\b(z,t)=\frac{\Gamma(z)\Gamma(t)}{\Gamma(z+t)}\,,
\end{equation}
see e.g. \cite{E}.

The functions above appear in the following result concerning the tail-average.
\begin{prop}
\label{le-kro02}
Let $(a_{k_{1},\ldots,k_{m}})_{k_{1},\ldots,k_{m}\in\mathbb{N}}$ and ${\bf b}$ be a multi-indices sequence and a sequence respectively, satisfying all the hypotheses of Proposition \ref{le-kro01}. If in addition,
\begin{equation}
\label{invtr}
a_{k_{1}-h,\ldots,k_{m}-h}=a_{k_{1},\ldots
,k_{m}}
\end{equation}
for any $k_{1},\ldots,k_{m}\in\mathbb{N}$ and $h<
\min\{k_{1},\ldots,k_{m}\}$,
then
\begin{equation}
\label{kro00b}
\lim_{n}\frac{1}{n^{p+q}}\sum_{k=1}^n b_k\sum_{k+1\leq k_{1},\ldots,k_{m}\leq n}a_{k_{1},\ldots
,k_{m}}=ab\frac{\Gamma(p+1)\Gamma(q+1)}{\Gamma(p+q+1)}\,.
\end{equation}
\end{prop}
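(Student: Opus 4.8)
The plan is to use the translation invariance \eqref{invtr} to collapse the tail sum into a shifted full-box sum, and then to reduce the whole expression to Theorem \ref{le-kro00} applied to the decreasing function $f(x)=(1-x)^q$, recognising the Beta integral in the limit.

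First I would exploit \eqref{invtr}. Every index in the inner sum obeys $k_i\geq k+1$, hence $\min\{k_1,\ldots,k_m\}\geq k+1>k$, and \eqref{invtr} applies with $h=k$, giving $a_{k_1,\ldots,k_m}=a_{k_1-k,\ldots,k_m-k}$. Setting $j_i=k_i-k$ carries the tail box $\{k+1,\ldots,n\}^m$ onto the full box $\{1,\ldots,n-k\}^m$, so that
$$
\sum_{k+1\leq k_1,\ldots,k_m\leq n}a_{k_1,\ldots,k_m}=\sum_{1\leq j_1,\ldots,j_m\leq n-k}a_{j_1,\ldots,j_m}=(n-k)^q a_{n-k}\,,
$$
for $1\leq k\leq n-1$, where $a_j:=j^{-q}\sum_{1\leq j_1,\ldots,j_m\leq j}a_{j_1,\ldots,j_m}$ is the convergent sequence of Proposition \ref{le-kro01}, with $\lim_j a_j=a$; the term $k=n$ vanishes, its inner sum being empty. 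Dividing by $n^{p+q}$, the left-hand side of \eqref{kro00b} becomes $\frac1{n^p}\sum_{k=1}^{n-1}b_k\big(1-\frac kn\big)^q a_{n-k}$. The essential new feature, relative to Proposition \ref{le-kro01}, is that the coefficient $a_{n-k}$ is \emph{reversed} — it depends on $n-k$ rather than on $k$ — so the sum is not of the form $M_{{\bf ab},f;p}(n)$ and Lemma \ref{le-si} cannot be invoked directly.

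To circumvent this I would split $a_{n-k}=a+(a_{n-k}-a)$. The main term $\frac a{n^p}\sum_{k=1}^{n}b_k(1-k/n)^q$ (the $k=n$ summand being zero since $q>0$) is exactly $a\,M_{{\bf b},f;p}(n)$ with $f(x)=(1-x)^q$. This $f$ is decreasing on $(0,1]$, and the integrability hypotheses of Theorem \ref{le-kro00} hold because $\int_0^1 x^{p-1}(1-x)^q\di x=\b(p,q+1)<\infty$ and $\int_0^1 x^p\,|\di f|=q\int_0^1 x^p(1-x)^{q-1}\di x=q\,\b(p+1,q)<\infty$. Thus Theorem \ref{le-kro00} yields $\lim_n a\,M_{{\bf b},f;p}(n)=abp\,\b(p,q+1)$, and \eqref{eul} together with $p\Gamma(p)=\Gamma(p+1)$ rewrites this as $ab\,\Gamma(p+1)\Gamma(q+1)/\Gamma(p+q+1)$, the claimed value.

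The main obstacle is showing that the remainder $R_n:=\frac1{n^p}\sum_{k=1}^{n-1}b_k(a_{n-k}-a)(1-k/n)^q$ tends to $0$. Given $\delta>0$, I would choose $J$ with $|a_j-a|<\delta$ for $j>J$ and set $M':=\sup_j|a_j-a|<\infty$. On the bulk $1\leq k\leq n-J-1$ one has $|a_{n-k}-a|<\delta$ and $(1-k/n)^q\leq1$, so that part is bounded by $\delta\,n^{-p}\sum_{k=1}^n|b_k|=\delta\,M_{|{\bf b}|,1;p}(n)\leq\delta B$ using \eqref{mcza}. On the boundary $n-J\leq k\leq n-1$ there are at most $J$ terms, with $|a_{n-k}-a|\leq M'$ and $(1-k/n)^q=((n-k)/n)^q\leq(J/n)^q$, so this part is at most $M'(J/n)^q\,M_{|{\bf b}|,1;p}(n)\leq M'B(J/n)^q$, which vanishes as $n\to+\infty$ for $J$ fixed. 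Hence $\limsup_n|R_n|\leq\delta B$, and letting $\delta\to0$ gives $R_n\to0$, which combined with the main term completes the proof.
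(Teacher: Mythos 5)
Your proposal is correct and follows essentially the same route as the paper: the same use of \eqref{invtr} to shift the tail box to a full box, the same splitting of $a_{n-k}$ into $a$ plus a vanishing error, the same application of Theorem \ref{le-kro00} with $f(x)=(1-x)^q$ and the Beta--Gamma identity for the main term, and the same bulk/boundary estimate (via \eqref{mcza} and the $(J/n)^q$ factor) for the remainder. The only difference is cosmetic: you verify the integrability hypotheses of Theorem \ref{le-kro00} explicitly and use $\sup_j|a_j-a|$ where the paper uses a uniform bound $M$ on the Cesaro means of the multi-indices sequence.
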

\begin{proof}
Notice that \eqref{invtr} gives
$$
\sum_{k+1\leq k_{1},\ldots,k_{m}\leq n}a_{k_{1},\ldots,k_{m}}=\sum_{1\leq k_{1},\ldots,k_{m}\leq n-k}a_{k_{1},\ldots,k_{m}}
$$
and, consequently,
\begin{align*}
&\frac{1}{n^{p+q}}\sum_{k=1}^n b_k\sum_{k+1\leq k_{1},\ldots,k_{m}\leq n}a_{k_{1},\ldots,k_{m}}\\
=&\frac{1}{n^{p+q}}\sum_{k=1}^n b_k(n-k)^{q}\bigg[\frac{1}{(n-k)^{q}}
\sum_{1\leq k_{1},\ldots,k_{m}\leq n-k}a_{k_{1},\ldots,k_{m}}-a\bigg]\\
+&\frac{a}{n^{p+q}}\sum_{k=1}^n b_k(n-k)^{q}\,.
\end{align*}
From Proposition \ref{le-kro00}, one has
\begin{align*}
\lim_{n}\frac{1}{n^{p+q}}\sum_{k=1}^n b_k(n-k)^{q}&=bp\int_0^1 x^{p-1}(1-x)^q \di x \\
&=b\frac{\Gamma(p+1)\Gamma(q+1)}{\Gamma(p+q+1)},
\end{align*}
the last equality coming from \eqref{eul} and $\Gamma(z+1)=z\Gamma(z)$.

The thesis then follows once one shows
\begin{equation*}
\frac{1}{n^{p+q}}\sum_{k=1}^n b_k(n-k)^{q}\bigg[\frac{1}{(n-k)^{q}}
\sum_{1\leq k_{1},\ldots,k_{m}\leq n-k}a_{k_{1},\ldots,k_{m}}-a\bigg]
\end{equation*}
is infinitesimal for $n\rightarrow\infty$.
Indeed, since for any $\eps>0,$ there is $l_{0}\in\mathbb{N}$ such that for any
$h\geq l_{0}$
$$
\bigg|\frac{1}{h^{q}}\sum_{1\leq k_{1},\ldots,k_{m}\leq h}a_{k_{1},\ldots,k_{m}}-a\bigg|\leq\eps,
$$
one has for each $k=1,\ldots, n-l_0$,
$$
\bigg|\frac{1}{(n-k)^{q}}
\sum_{1\leq k_{1},\ldots,k_{m}\leq n-k}a_{k_{1},\ldots,k_{m}}-a\bigg|\leq\eps.
$$
Thus, denoting by $M>0$ a uniform bound for the sequence of the multiple of Cesaro means of $(a_{k_{1},\ldots,k_{m}})$, by \eqref{mcza} one finds
\begin{align*}
&\bigg|\frac{1}{n^{p+q}}\sum_{k=1}^n b_k(n-k)^{q}\bigg[\frac{1}{(n-k)^{q}}
\sum_{1\leq k_{1},\ldots,k_{m}\leq n-k}a_{k_{1},\ldots,k_{m}}-a\bigg]\bigg| \\
\leq&\frac{ \eps}{n^p}\sum_{k=1}^{n-l_0} |b_k|\bigg(\frac{n-k}{n}\bigg)^{q}\\
+&\bigg|\frac{1}{n^{p}}\sum_{k=n-l_0+1}^{n}b_k \bigg(\frac{n-k}{n}\bigg)^{q}\bigg[\frac{1}{(n-k)^{q}}
\sum_{1\leq k_{1},\ldots,k_{m}\leq n-k}a_{k_{1},\ldots,k_{m}}-a\bigg]\bigg|\\
\leq&\bigg[\eps +2M\bigg(\frac{l_0}{n}\bigg)^{q}\bigg]B\,.
\end{align*}
The proof is achieved as $\eps$ is arbitrary.
\end{proof}

\section{some counterexamples}

We end the present note by showing some counterexamples concerning the average-convergence of sequences.

We start by noticing that in Theorem \ref{le-kro00},
the case with ${\bf b}$ identically equal to 1 and $p=1$ corresponds simply to ask whether the sequence of the Riemann sums of a $L^1$-function $f$, made partitioning the interval $[0,1]$ in $n$ subintervals of uniform length $1/n$ and Riemann integrable on all the subintervals $[\eps, 1]$, converges to the integral of $f$. The following simple counterexample (which can be easily modified to achieve the continuous case)
$$
f=\sum_{n=1}^{+\infty}n^2\chi_{\{1/n\}}
$$
tells us that it is not always the case, even if one imposes mild natural conditions on $f$.

Now we pass to see that the convergence of $\frac1{n}\sum_{k=1}^n|b_k|$ does not imply that ${\bf b}$ is mean-convergent. Let ${\bf b}$ be the sequence defined as
$$
{\bf b}:=\overbrace{1}^{2^0}\,,\overbrace{-1,-1}^{2^1}\,,\overbrace{1,1,1,1}^{2^2}\,,\overbrace{-1,\dots,-1}^{2^3}\,,\dots\,\,.
$$
Define, for each integer $n$,
$$
m_n:=2\cdot4^n-1\,,\quad h_n:=4^{n+1}-1\,.
$$
On one hand, it is easy to check that
$$
\frac1{n}\sum_{k=1}^n|b_k|=1.
$$
On the other hand, for the subsequences indexed by $m_n$ and $h_n$ respectively, one finds
$$
M_{{\bf b},1;1}(m_n)=\frac1m_n\bigg(\sum_{k=0}^n 2^{2k}-\frac12 \sum_{k=1}^n 2^{2k}\bigg)=\frac13\,,
$$
and
$$
M_{{\bf b},1;1}(h_n)=\frac1h_n \bigg(\sum_{k=0}^n 2^{2k}-\frac12 \sum_{k=1}^{n+1} 2^{2k}\bigg)=-\frac13\,.
$$
What follows is a simple counterexample for the general failure of Lemma \ref{le-si} if condition \eqref{mcza} is not satisfied.
Let ${\bf b}=(b_k)_{k\in \mathbb{N}}$ and ${\bf a}=(a_k)_{k\in \mathbb{N}}$ be defined as follows:
\begin{align*}
&b_{2n-1}:=-\sqrt{2n}\,,\quad b_{2n}:=1+\sqrt{2n}\,,\quad n\in\bn\,,\\
&a_{2n-1}:=-\frac{1}{\sqrt{2n}}\,,\quad a_{2n}:=\frac{1}{\sqrt{2n}}\,,\quad n\in\bn\,.
\end{align*}
Then $a=\lim_n a_n=0$, and $b=\lim_nM_{{\bf b},1;1}(n)=\frac{1}{2}$. Furthermore, as $n\to+\infty$, first
$$
\frac{1}{2n}\sum_{k=1}^{2n}|b_k|=\frac{1}{2}+\frac{1}{n}\sum_{k=1}^{n}\sqrt{2k}\to+\infty\,,
$$
and second
$$
M_{{\bf ab},1;1}(2n)=1+\frac{1}{2n}\sum_{k=1}^{n}\frac1{\sqrt{2k}}\to1>0=ab\,.
$$
Finally, one can wonder if
\eqref{kro00b} holds true
under all the assumptions of Proposition \ref{le-kro01} but \eqref{invtr}.
The answer is negative as the following example shows for the case $p=1$, $m=2$, and $q=m$.
Indeed, take
$$
b_k=1\,,\quad a_{k_1,k_2}=(\sqrt
{k_1}-\sqrt{k_1-1})\sqrt{k_2}\,,\quad k,k_1,k_2\in\mathbb{N}\,.
$$
Then $b=1$ and $a=\frac{2}{3}$ as
\begin{align*}
&\lim_{n}\frac{1}{n^{2}}\sum_{1\leq k_1,k_2\leq n}a_{k_1,k_2}=\lim_{n}\frac{1}{n^{2}}\sum_{1\leq k_1,k_2\leq n}(  \sqrt{k_1}-\sqrt{k_1-1})\sqrt{k_2}\\
=&\lim_{n}\frac{1}{n^{2}}\sqrt{n}\sum_{k_2=1}^{n}\sqrt{k_2}=\lim_{n}\frac{1}{n}\sum_{k_2=1}^{n}\sqrt{\frac{k_2}{n}}
=\int_{0}^{1}x^{\frac{1}{2}}dx=\frac{2}{3}\,.
\end{align*}
Computing the left hand side of \eqref{kro00b}, we get
\begin{align*}
&\lim_{n}\frac{1}{n^{3}}\sum_{k=1}^n b_k\sum_{k+1\leq k_{1},k_{2}\leq n} a_{k_1,k_2}=\lim_{n}\frac{1}{n^{3}}\sum_{k=1}^{n}(\sqrt{n}-\sqrt{k})\sum_{k_2=k+1}^{n}\sqrt{k_2}\\
=&\lim_{n}\frac{1}{n^{2}}\sum_{k=1}^{n}\bigg(1-\sqrt{\frac{k}{n}}\bigg)\sum_{k_2=k+1}^{n}\sqrt{\frac{k_2}{n}}
=\int_{0}^{1}\di x(1-\sqrt{x})\int_{x}^{1}\di y\sqrt{y}\\
=&\frac{4}{15}ab\neq\frac{ab}3\,.
\end{align*}

\section*{Note added in proof}

The authors are grateful to O. Kouba who has drawn their attention to Theorem 1 in his note \cite{K} while the present article was in press. The statement of such a theorem is the same as our 
Theorem \ref{le-kro00}, provided that the involved function $f$ and the sequence $(b_n)_{n\in\bn}$ are uniformly continuous on $(0,1]$ and positive, respectively. By using Weierstrass' Density Theorem as in \cite{K}, the former is a corollary of the latter, and can be extended to general  $p$-mean convergent complex-valued sequences $(b_n)_{n\in\bn}$, provided that the sequence of their moduli 
$(|b_n|)_{n\in\bn}$ satisfies \eqref{mcza}.

\section*{Acknowledgements}

The authors have been partially supported by Italian INDAM--GNAMPA. They kindly acknowledge R. Peirone for some fruitful suggestions.

\end{document}